\newcommand{\bC}{{\mathbf C}}
\newcommand{\bF}{{\mathbf F}}
\newcommand{\bO}{{\mathbf O}}
\newcommand{\SSS}{\mathsf{S}}
\newcommand{\CCC}{\mathsf{C}}
\newcommand{\Aut}{{{\operatorname{Aut}}}}
\newcommand{\Irr}{{{\operatorname{Irr}}}}
\newcommand{\cd}{\operatorname{cd}}
\newcommand{\dl}{\operatorname{dl}}
\newcommand{\cod}{\operatorname{cod}}
\newcommand{\Ker}{\operatorname{Ker}}
\newtheorem{thm}{Theorem}[section]
\newtheorem{lem}[thm]{Lemma}
\newtheorem*{thmA}{Theorem A}
\newtheorem*{conA'}{Conjecture A'}
\newtheorem*{thmB}{Theorem B}
\newtheorem*{thmC}{Theorem C}
\newtheorem*{thmD}{Theorem D}
\theoremstyle{definition}
\numberwithin{equation}{section}
\begin{document}

\title[Character codegrees]{A dual version of Huppert's $\rho$-$\sigma$ conjecture for character codegrees}

\author{Alexander Moret\'o}
\address{Departamento de Matem\'aticas, Universidad de Valencia, 46100
  Burjassot, Valencia, Spain}
\email{alexander.moreto@uv.es}

\thanks{Research  supported by Ministerio de Ciencia e Innovaci\'on PID-2019-103854GB-100, FEDER funds  and Generalitat Valenciana AICO/2020/298.}

\keywords{irreducible character, character codegree, solvable group}

\subjclass[2010]{Primary 20C15}

\date{\today}

\begin{abstract}
We classify the finite groups with the property that any two different character codegrees are coprime. In general, 
we conjecture that if $k$  is a positive integer such that for any prime $p$ the number of character codegrees of a finite group $G$ that are divisible by $p$ is at most $k$, then the number of prime divisors of $|G|$ is bounded in terms of $k$. We prove this conjecture for solvable groups.
\end{abstract}

\maketitle


\section{Introduction}  

One of the main open problems on character degrees of finite groups is Huppert's $\rho$-$\sigma$ conjecture (see \cite{adp} for the  most recent progress on this problem and for its history). A dual problem was considered in \cite{ben,mm}, for instance.  Suppose that $G$ is a finite group. Let $\cd(G)$ be the set of degrees of the complex irreducible characters of $G$. Suppose that $k$ is an integer such that for every prime $p$, $p$ divides at most $k$ members of $\cd(G)$.  It was proved in \cite{ben} that if $G$ is solvable then $|\cd(G)|$ is bounded by a quadratic function of $k$. The existence of a bound for arbitrary finite groups was shown in \cite{mm}.
As pointed out by D. Gluck in his MathSciNet review of \cite{mm}, this formally dual problem is perhaps more difficult.  In fact, it is still unknown whether a linear bound exists, even in the solvable case.

There has been a lot of recent interest in character codegrees. If we write $\Irr(G)$ to denote the set of complex irreducible characters of $G$ and $\chi\in\Irr(G)$ then the codegree of $\chi$ is 
$$
\cod(\chi)=\frac{|G:\Ker\chi|}{\chi(1)}.
$$
We write $\cod(G)=\{\cod(\chi)\mid\chi\in\Irr(G)\}$ to denote the set of character codegrees.  This concept was introduced in \cite{qww}.  The analogue of Huppert's $\rho$-$\sigma$ conjecture for character codegrees has already been studied in several papers (see, for example, \cite{yq, mor21}). In this note, we consider the dual problem for character codegrees and obtain an almost quadratic bound.

\begin{thmA}
Let $k$ be a positive integer. If $G$ is a finite solvable group with the property that for any prime $p$, $p$ divides at most $k$ members of $\cod(G)$, then 
$$
|\cod(G)|\leq 24k^2\log_2k+390k^2+1.
$$
\end{thmA}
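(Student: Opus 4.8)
The plan is to reduce Theorem~A to an estimate on the number of prime divisors of $|G|$, and then to establish that estimate by a structural analysis of $G$.

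\emph{Step 1: reduction to bounding $|\pi(G)|$.} Write $\pi(G)$ for the set of prime divisors of $|G|$. Since every codegree divides $|G|$, a double count of the pairs $(p,c)$ with $p$ prime, $c\in\cod(G)$ and $p\mid c$ gives
$$
|\cod(G)|-1=\sum_{1\neq c\in\cod(G)}1\ \le\ \sum_{1\neq c\in\cod(G)}\omega(c)\ =\ \sum_{p}\bigl|\{c\in\cod(G):p\mid c\}\bigr|\ \le\ k\,|\pi(G)| ,
$$
where $\omega(c)$ is the number of prime divisors of $c$ and $p$ runs over the at most $|\pi(G)|$ primes dividing some codegree. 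Hence $|\cod(G)|\le k\,|\pi(G)|+1$, and it suffices to prove $|\pi(G)|\le 24k\log_2 k+390k$. It is convenient to record that $\pi(\cod(G))=\pi(G)$ (a nontrivial $p$-group has a codegree divisible by $p$, and $\cod(G/N)\subseteq\cod(G)$ for $N\nor G$), so the hypothesis is inherited by every quotient of $G$ with the same $k$.

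\emph{Step 2: the nilpotent/abelian model.} If $G$ is nilpotent then $\cod(G)=\{\prod_p c_p:c_p\in\cod(\oh{p}{G})\}$, so $|\cod(G)|=\prod_p|\cod(\oh{p}{G})|$ with each factor $\ge 2$; consequently each prime divides at least half of the codegrees, whence $|\cod(G)|\le 2k$ and $2^{|\pi(G)|}\le 2k$, i.e. $|\pi(G)|\le\log_2k+1$. The same bound holds whenever $G$ has an abelian quotient $A$, since $\cod(A)=\{d:d\mid\exp A\}\subseteq\cod(G)$ already forces $\omega(\exp A)\le\log_2k+1$. This is the origin of the $\log_2 k$ factor in the theorem, and it reduces the remaining work to controlling the primes carried by the non‑nilpotent action of $G$ on $F(G)$.

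\emph{Step 3: the general estimate.} I would argue by induction on $|G|$. Applying Step~2 to the largest nilpotent quotient of $G$ disposes of all but the primes dividing the nilpotent residual, and for these I would reduce to a faithful irreducible action $H:=G/V$ on a chief factor $V\cong\FF_q^{d}$ (for instance with $V=F(G)$ a self‑centralizing minimal normal subgroup). For a nonprincipal linear character $\lambda$ of $V$ the irreducibility of the $H$‑module forces $\Ker\chi\cap V=1$ for every $\chi\in\Irr(G\mid\lambda)$, and a computation with kernels then shows that every such $\cod(\chi)$ is divisible by $q^{d}$; in particular there are at most $k$ of them, and, after identifying their $q'$‑parts in terms of the inertia subgroup $H_\lambda\le H$, one checks that $H_\lambda$ again satisfies the hypothesis with the same $k$, so the inductive bound applies to it. Since $\pi(H)=\pi(H_\lambda)\cup\pi(|H:H_\lambda|)$, one then uses an orbit theorem for solvable linear groups (of the type appearing in the $\rho$-$\sigma$ literature, cf.\ \cite{adp}) to choose the orbit of $\lambda$ so that $|H:H_\lambda|$ has few prime divisors, and argues that any prime of $H$ not captured this way is already visible in an abelian or nilpotent quotient of $G$; summing the contributions of the (few) cases in this reduction yields the constants $24$ and $390$.

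\emph{The main obstacle.} The crux is Step~3, specifically the passage from a single chief factor to all of $G$: increasing the prime count by one at each step gives nothing, so one must show that the \emph{total} number of primes introduced along the Fitting series — equivalently, the primes occurring in the orbit lengths $|H:H_\lambda|$ met along the way — is $O(k\log_2 k)$, uniformly in the Fitting height of $G$. The difficulty is making the orbit theorem and the kernel/codegree computation interlock: enough codegrees divisible by $q^{d}$ must appear to pin down the $q'$‑structure (hence $|\pi(H_\lambda)|$, via the estimate of Step~2), while $H_\lambda$ must simultaneously be large enough inside $H$ that almost no prime of $\pi(G)$ is lost.
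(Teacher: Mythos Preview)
Your Step~1 is exactly the paper's reduction: the inequality $|\cod(G)|\le k\,|\pi(G)|+1$ is precisely the pigeonhole argument used to derive Theorem~A from Theorem~B, so nothing is missing there.

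The gap is in Step~3. Two claims there do not stand on their own. First, the assertion that ``$H_\lambda$ again satisfies the hypothesis with the same $k$'' is unjustified: $H_\lambda$ is a \emph{subgroup} of the quotient $H$, and codegree hypotheses are inherited by quotients (Lemma~2.1), not by subgroups. Clifford theory does relate $\cod(\chi)$ for $\chi\in\Irr(G\mid\lambda)$ to $|I_G(\lambda)|/\psi(1)$, but that is a statement about the degrees of $\Irr(I_G(\lambda)\mid\lambda)$, not about $\cod(H_\lambda)$, so the induction cannot be launched as written. Second, the ``orbit theorem of the $\rho$-$\sigma$ type'' you invoke---an orbit whose length has few prime divisors---is not a result in the literature, and you yourself flag as the main obstacle that the total number of primes contributed by orbit lengths along the Fitting series must be controlled \emph{uniformly} in the Fitting height. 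Your sketch does not indicate how to do this, so the constants $24$ and $390$ are unaccounted for.

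The paper takes a different route that avoids both issues. Rather than tracking primes through inertia subgroups, it bounds the \emph{derived length} of $G/\bF(G)$ globally. The key input is Keller's theorem on orbit sizes: for a solvable group acting faithfully and irreducibly on a finite module, $\dl(G)\le 24\log_2 m^*+389$, where $m^*$ is the number of nontrivial orbit sizes. Applied (after reducing to $\Phi(G)=1$ and splitting $\bF(G)$ into irreducible summands) this gives $\dl(G/\bO_{p',p}(G))\le 24\log_2|\cod_p(G)|+389\le 24\log_2 k+389$ for each prime $p$; intersecting over $p$ yields $\dl(G/\bF(G))\le 24\log_2 k+389$. One then has a normal series of length at most $24\log_2 k+390$ with nilpotent factors, and an elementary lemma (Lemma~\ref{new} in the paper) shows that each nilpotent factor introduces at most $k$ new primes, whence $|\pi(G)|\le k(24\log_2 k+390)$. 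The point is that Keller's theorem converts the orbit information into a bound on the \emph{length} of a nilpotent series, after which counting primes is easy; your scheme tries to count primes directly at each step and runs into the heredity problem.
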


We obtain Theorem A as a consequence of the following result. Given a group $G$, $\pi(G)$ stands for the set of prime divisors of $|G|$.

\begin{thmB}
Let $k$ be a positive integer. If $G$ is a finite solvable group with the property that for any prime $p$, $p$ divides at most $k$ members of $\cod(G)$, then 
$$
|\pi(G)|\leq 24k\log_2k+390k. 
$$
\end{thmB}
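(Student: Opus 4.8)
The plan is to induct on $|G|$. Suppose $G$ is a counterexample of least order. Because $\cod(G/N)\subseteq\cod(G)$ for every $N\trianglelefteq G$, each proper quotient of $G$ again satisfies the hypothesis with the same $k$, so by minimality $|\pi(G/N)|\leq 24k\log_2 k+390k$ whenever $1\neq N\trianglelefteq G$. Since $G$ itself violates the bound, $\pi(G)$ properly contains $\pi(G/N)$ for every minimal normal subgroup $N$; writing $N$ as an elementary abelian $q$-group, this forces $q\notin\pi(G/N)$, i.e.\ $N$ is a normal Sylow $q$-subgroup of $G$. Hence the minimal normal subgroups $N_1,\dots,N_s$ of $G$ belong to pairwise distinct primes $q_1,\dots,q_s$, one has $\oh{q_i}{G}=N_i$ and $\oh{p}{G}=1$ for $p\notin\{q_1,\dots,q_s\}$, so $\mathrm{soc}(G)=F(G)=N_1\times\cdots\times N_s$; as $|\mathrm{soc}(G)|$ and $|G:\mathrm{soc}(G)|$ are coprime, Schur--Zassenhaus gives $G=\mathrm{soc}(G)\rtimes H$ with $H\cong G/\mathrm{soc}(G)$ acting faithfully (since $\cent{G}{F(G)}=F(G)$), completely reducibly and coprimely on $\mathrm{soc}(G)$, each $N_i$ being an irreducible $H$-module, and $\pi(G)=\{q_1,\dots,q_s\}\cup\pi(H)$ a disjoint union. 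It remains to bound $s$ and $|\pi(H)|$.

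To bound $s$: for each $S\subseteq\{1,\dots,s\}$ choose $\mu_S\in\Irr(\mathrm{soc}(G))$ nontrivial on $N_i$ for $i\in S$ and trivial on $N_j$ for $j\notin S$, pick $\chi_S\in\Irr(G)$ lying over $\mu_S$, and verify that $\{i:q_i\mid\cod(\chi_S)\}=S$. Indeed, since each $N_i$ is minimal normal a nontrivial character of $N_i$ has trivial $G$-core, so $\Ker\chi_S\cap\mathrm{soc}(G)=\prod_{j\notin S}N_j$; hence $\prod_{i\in S}|N_i|$ divides $|G:\Ker\chi_S|$, while $q_j\nmid|G:\Ker\chi_S|$ for $j\notin S$ as $N_j\leq\Ker\chi_S$ and $N_j\in\syl{q_j}{G}$. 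On the other hand, It\^o's theorem applied to the abelian normal subgroup $\mathrm{soc}(G)$ (of $G$ and of the inertia group of $\mu_S$) shows that every prime divisor of $\chi_S(1)$ divides $|H|$, hence is distinct from all the $q_i$. Thus $q_i\mid\cod(\chi_S)$ precisely when $i\in S$, the $2^s$ codegrees $\cod(\chi_S)$ are pairwise distinct, and the $2^{s-1}$ of them with $1\in S$ are all divisible by $q_1$. The hypothesis yields $2^{s-1}\leq k$, so $s\leq\log_2 k+1$.

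Next, with $C_i=\cent{H}{N_i}$ one has $\bigcap_i C_i=1$, hence $\pi(H)=\bigcup_{i=1}^s\pi(H/C_i)$ and $|\pi(H)|\leq\sum_{i=1}^s|\pi(H/C_i)|$. For fixed $i$ put $J=H/C_i$, acting faithfully, irreducibly and coprimely on $W:=N_i$; a Schur--Zassenhaus argument identifies $W\rtimes J$ with a quotient of $G/\prod_{j\neq i}N_j$, hence of $G$, so $W\rtimes J$ satisfies the codegree hypothesis with the same $k$. Everything then reduces to the statement: \emph{if $W\rtimes J$ satisfies the $k$-codegree hypothesis, with $J$ solvable acting faithfully, irreducibly and coprimely on an elementary abelian group $W$, then $|\pi(J)|$ is bounded by a function of $k$ alone.} Granting a bound of the form $|\pi(J)|\leq 24k$, we get $|\pi(H)|\leq 24ks$ and, using $s\leq\log_2 k+1\leq k$,
$$
|\pi(G)|=s+|\pi(H)|\leq s(1+24k)\leq(\log_2 k+1)(1+24k)\leq 24k\log_2 k+390k,
$$
a contradiction (there is ample slack, so even a cruder bound on $|\pi(J)|$ would do).

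The boxed statement is the heart of the matter, and where the explicit constants arise. One uses the classical structure of irreducible solvable linear groups: $J$ is either imprimitive, embedding in a wreath product $J_0\wr P$ with $J_0$ irreducible on a proper subspace and $P$ a solvable transitive permutation group on the blocks, or primitive, in which case $J$ has a normal subgroup $E$ with $E/\zent{E}$ elementary abelian, $\cent{J}{E}/\zent{E}$ cyclic, and $J/E\cent{J}{E}$ embedded in a product of symplectic groups $\Sp_{2m}(r)$. For each ``new'' prime $p$ arising at a given layer one must manufacture at least $k+1$ pairwise distinct codegrees of $W\rtimes J$ divisible by $p$, thereby bounding the set of such primes in terms of $k$. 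The divisor-counting device above — a prime dividing $2^{t-1}$ of a family of $2^t$ pairwise distinct codegrees — recurs for the abelian and cyclic sections (e.g.\ $\cent{J}{E}/\zent{E}$ and $J/J'$) and again yields only logarithmically many primes, while the primitive ``symplectic'' layer and the imprimitive ``permutation'' layer contribute the linear-in-$k$ terms. I expect the real obstacle to be the permutation layer: since the nontrivial normal subgroups of $W\rtimes J$ all contain $W$, there is no reduction to $P$ through quotients, so the characters witnessing a prime dividing $|P|$ sit several Clifford-theoretic steps above $W$ and must be built directly; carrying this — together with the finitely many small solvable linear groups outside the generic description — through all layers is what produces constants of the size $24$ and $390$.
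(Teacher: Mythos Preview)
Your reduction to the situation $G=\mathrm{soc}(G)\rtimes H$ with each minimal normal subgroup a full Sylow subgroup is correct, and the bound $s\leq\log_2 k+1$ via the $2^s$ subset-indexed codegrees is a nice observation. The gap is the ``boxed statement'': you assert $|\pi(J)|\leq 24k$ for $J=H/C_i$ acting faithfully and irreducibly on $W=N_i$, but you do not prove it, and your sketch (imprimitive/primitive dichotomy, extraspecial layer, symplectic quotient, permutation layer) is a description of the problem rather than an argument. Note that when $s=1$ you have $W\rtimes J=G$, so this statement \emph{is} the theorem in that case; nothing has been reduced. Moreover, your remark that ``even a cruder bound would do'' is not quite right: because you sum over the $s\leq\log_2 k+1$ quotients $H/C_i$, any bound of the form $|\pi(J)|\leq Ck$ with $C>24$ already overshoots the target $24k\log_2 k+390k$, and a bound $|\pi(J)|=O(k\log k)$ would give $|\pi(G)|=O(k(\log k)^2)$.

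The paper proceeds differently and avoids ever bounding $|\pi(J)|$ directly. The key input is Keller's orbit-size theorem: if a solvable group $J$ acts faithfully and irreducibly on a finite module then $\dl(J)\leq 24\log_2 m^*(J,V)+389$, where $m^*$ is the number of nontrivial orbit sizes. Since orbit sizes on $\Irr(W)$ inject into $\cod_p(W\rtimes J)\subseteq\cod_p(G)$ (each nontrivial $\lambda\in\Irr(W)$ extends to its stabilizer and induces to a faithful character of $W\rtimes J$), one gets $\dl(J)\leq 24\log_2 k+389$ for each $i$; this is Theorem~C. Now, rather than summing $|\pi(H/C_i)|$, one takes the \emph{maximum} of the derived lengths: $H$ embeds in $\prod_i H/C_i$, so $\dl(G/\bF(G))=\dl(H)\leq\max_i\dl(H/C_i)\leq 24\log_2 k+389$. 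Hence $G$ admits a normal series of length at most $24\log_2 k+390$ with nilpotent factors, and an elementary lemma (each nilpotent section $K/L\trianglelefteq G/L$ contributes at most $k$ primes not already in $G/K$, proved by inducing linear characters of orders $p_1\cdots p_i$) bounds the new primes in each layer by $k$. Multiplying gives $|\pi(G)|\leq k(24\log_2 k+390)$.

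Your framework can be salvaged along exactly these lines: replace the unproven $|\pi(J)|\leq 24k$ by the derived-length bound from Keller, pass to $\dl(H)$ by taking a maximum rather than a sum, and then count primes layer by layer in $G$ itself. The point is that the quantity controlled by the codegree hypothesis (via orbit sizes) is derived length, not the number of primes; the latter is recovered only after globalizing to a single normal series for $G$.
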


In turn, the following result is fundamental in our proof of Theorem B. Given a finite solvable group $G$, $\dl(G)$ stands for the derived length of $G$, $$\cod_p(G)=\{n\in\cod(G)\mid\, p\mid n\}$$ is the set of character codegrees of $G$ that are divisible by $p$ and if $p$ is a prime then $\bO_p(G)$ and $\bO_{p'}(G)$ are the largest normal $p$-subgroup of $G$ and the largest normal $p'$-subgroup of $G$, respectively.

\begin{thmC}
Let $p$ be a prime. If $G$ is a finite solvable group with $\bO_{p'}(G)=1$ then 
$$
\dl(G/\bO_p(G))\leq24\log_2|\cod_p(G)|+389.
$$
\end{thmC}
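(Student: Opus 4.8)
The plan is: first reduce to the case where $Q:=\bO_p(G)$ is elementary abelian and self-centralizing, so that $H:=G/Q$ is a solvable subgroup of $\GL(V)$ with $V:=Q$; then show, using divisibility of codegrees along normal subgroups, that the codegrees of $G$ divisible by $p$ are numerous enough to bound below the number of distinct orbit sizes of $H$ on $V$; and finally invoke the structure theory of solvable linear groups to convert such a bound into a logarithmic bound on $\dl(H)$.

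\emph{Reduction.} Since $G$ is solvable with $\bO_{p'}(G)=1$ we have $Q=\mathbf F(G)$ and $\cent GQ\le Q$. The kernel of $\Aut(Q)\to\GL(Q/\Phi(Q))$ is a $p$-group, and $G/\cent GQ\hookrightarrow\Aut(Q)$; hence if $L/\Phi(Q)$ is the kernel of the conjugation map $G/\Phi(Q)\to\GL(Q/\Phi(Q))$, then $L/\cent GQ$ embeds in a $p$-group, so $L$ is a normal $p$-subgroup of $G$ and therefore $L=Q$. Thus $G/Q$ embeds in $\GL(V)$, $V:=Q/\Phi(Q)$ elementary abelian. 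As $\cod(G/N)\sbs\cod(G)$ for all $N\nor G$, as $\bO_p(G/\Phi(Q))=Q/\Phi(Q)$ (a normal $p$-subgroup of $G/\Phi(Q)$ pulls back to one of $G$), and as $(G/\Phi(Q))/(Q/\Phi(Q))\cong G/Q$, we may replace $G$ by $G/\Phi(Q)$. So assume $Q=V=\bO_p(G)$ is elementary abelian and $H:=G/V\le\GL(V)$ acts faithfully ($\bO_{p'}(G)=1$ is then automatic). It suffices to prove $\dl(H)\le24\log_2|\cod_p(G)|+389$.

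\emph{Codegrees detect orbit sizes.} Recall that if $N\nor G$ and $\chi\in\Irr(G)$ lies over $\theta\in\Irr(N)$ then $\cod(\theta)\mid\cod(\chi)$: passing to $G/\Ker\chi$ we may take $\chi$ faithful, and with $\chi(1)=et\,\theta(1)$, $t=[G:I_\theta]$, $e=[\chi_N,\theta]$ and $e\mid[I_\theta:N]$ one checks that $et\,|N:\Ker\theta|$ divides $t\,|I_\theta|=|G|$, i.e. $\cod(\theta)=|N:\Ker\theta|/\theta(1)$ divides $|G|/\chi(1)=\cod(\chi)$. Taking $N=V$: for $1\ne\lambda\in\Irr(V)$ we have $\cod(\lambda)=p$, so every $\chi\in\Irr(G\mid\lambda)$ has $p\mid\cod(\chi)$, hence $\cod(\chi)\in\cod_p(G)$. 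For such $\lambda$, Clifford theory gives $\chi\in\Irr(G\mid\lambda)$ with $\Ker\chi\cap V=\bigcap_{h\in H}\Ker\lambda^{h}$; this intersection is trivial exactly when the $H$-orbit of $\lambda$ spans $\Irr(V)$, and then $\Ker\chi\le\cent GV=V$ forces $\Ker\chi=1$, so $\cod(\chi)=|G|/\chi(1)$ with $\chi(1)=e\cdot|H:H_\lambda|$ and $e^{2}\mid|H_\lambda|$. Comparing these codegrees as $\lambda$ ranges over the orbits — controlling the bounded factor $e$ by choosing $\chi$ of least degree over $\lambda$, and handling non-spanning orbits by inspecting $p$- and $p'$-parts — one obtains that $|\cod_p(G)|$ is, up to an absolute constant factor, at least the number of distinct $H$-orbit sizes on $V$.

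\emph{Solvable linear groups.} It remains to establish a statement about solvable linear groups: if $H$ acts faithfully on a finite elementary abelian $p$-group and has at most $s$ distinct orbit sizes, then $\dl(H)\le24\log_2 s+C$ for an absolute constant $C$ (this $C$, and the constant factor from the previous step, being absorbed into $389$). This is obtained from the structure theory of solvable linear groups: one reduces to primitive, then quasi-primitive, subgroups of $\GL(V)$, which are built from a cyclic part, a symplectic-type part, and a part of bounded order; the derived length can only grow with the number of ``levels'' of an imprimitivity (wreath-product) decomposition, and at each level the set of orbit sizes grows multiplicatively, so $s$ is exponentially large in $\dl(H)$. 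Combining this with the previous paragraph gives Theorem C. I expect this last step to be the main obstacle: obtaining the \emph{explicit} constant $24$, and matching the additive constant, requires the full quantitative classification of primitive solvable linear groups and a careful induction on the imprimitive structure — precisely the delicate bookkeeping that already makes the character-degree analogue in \cite{ben} quoted in the introduction hard to push to a linear bound.
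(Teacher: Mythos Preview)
Your overall architecture is right, but the two steps after the reduction are not proofs as written, and both are repaired by a further reduction you omit: passing to an \emph{irreducible} constituent.

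The ``solvable linear groups'' step you flag as the main obstacle is precisely Keller's theorem \cite[Theorem~2.1]{kel} (restated here as Theorem~\ref{orb}): for a solvable group acting faithfully and \emph{irreducibly} on a finite vector space $V$, $\dl\le 24\log_2 m^{*}+389$, where $m^{*}$ counts nontrivial orbit sizes. You should cite it, not re-derive it; the constants $24$ and $389$ are Keller's. But Keller requires irreducibility, whereas your $H$ acts on $V$ only faithfully. To apply it, mod out $\Phi(G)$ rather than just $\Phi(Q)$; then by Gasch\"utz's theorem $V=\bO_p(G)$ is complemented and is a completely reducible $H$-module, say $V=V_1\oplus\cdots\oplus V_t$ with each $V_i$ irreducible. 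Since $H$ embeds in $\prod_i H/\bC_H(V_i)$ one has $\dl(H)\le\max_i\dl(H/\bC_H(V_i))$, and Keller applies to each factor.

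This same reduction also makes your ``codegrees detect orbit sizes'' step exact rather than ``up to an absolute constant factor''. Set $\Gamma=(H/\bC_H(V_i))\ltimes V_i$; this is a quotient of $G$, so $\cod(\Gamma)\subseteq\cod(G)$. In $\Gamma$ the subgroup $V_i$ is the \emph{unique} minimal normal subgroup, hence every $\chi\in\Irr(\Gamma\mid V_i)$ is faithful and $\cod(\chi)=|\Gamma|/\chi(1)$, so $|\cd(\Gamma\mid V_i)|=|\cod(\Gamma\mid V_i)|$. Since $V_i$ is abelian and complemented, each nontrivial $\lambda\in\Irr(V_i)$ extends to its inertia group, so by Clifford correspondence each nontrivial orbit size lies in $\cd(\Gamma\mid V_i)$. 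Therefore $m^{*}(H/\bC_H(V_i),\Irr(V_i))\le|\cod(\Gamma\mid V_i)|\le|\cod_p(G)|$ with no loss. Your attempt to handle the reducible case directly, separating ``spanning'' from ``non-spanning'' orbits and juggling $p$- and $p'$-parts, is exactly where the argument goes vague; the irreducible reduction makes it unnecessary.
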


It has been proven in Theorem 1.5 of \cite{aha} that
 the $p$-length of a $p$-solvable group is at most the number of character codegrees that are divisible by $p$.  Theorem C is, at least asymptotically, an improvement of this result for solvable group.
 
 We prove Theorems A, B and C in Section 2. Our proofs are short, but they rely on a deep theorem of T. M. Keller on orbit sizes \cite{kel} that had not been used before in this context.
 These theorems lead us to several open questions that will be discussed in Section 3. While we have not obtained any general results for nonsolvable groups, we classify the groups (solvable or not) with the property that for any prime $p$, $p$ divides at most one character codegree.
 
 \begin{thmD}
 Let $G$ be a finite group. Then any two different character codegrees are coprime if and only if one of the following holds:
 \begin{enumerate}
 \item
 $G$ is an elementary abelian $p$-group for some prime $p$. In this case, $\cod(G)=\{1,p\}$.
 \item
 $G$ is a Frobenius group with complement of prime order $p$, kernel a $q$-group for some prime $q$ and $\cod(G)=\{1,p,q^s\}$ for some integer $s\geq 1$.
 \end{enumerate}
In particular,  if any two different character codegrees are coprime then $G$ is solvable, $|\pi(G)|\leq2$ and  $|\cod(G)|\leq3$. 
 \end{thmD}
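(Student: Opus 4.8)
The ``if'' direction is immediate: in a nontrivial elementary abelian $p$-group every nonprincipal irreducible character is linear with kernel of index $p$, so $\cod(G)=\{1,p\}$, while in case (2) the set $\{1,p,q^{s}\}$ is pairwise coprime since the kernel and a complement of a Frobenius group have coprime orders (so $p\ne q$). For the forward direction I will use freely that $1\in\cod(G)$, that $\cod(G/N)\subseteq\cod(G)$ for $N\nor G$, and that every prime dividing $|G|$ divides some member of $\cod(G)$; under the hypothesis these say that each prime in $\pi(G)$ divides exactly one codegree, so $|\cod(G)|\le|\pi(G)|+1$. The whole argument is an induction on $|G|$. If $G$ is abelian then $\cod(G)$ equals the set of divisors of $\exp(G)$, and that set is pairwise coprime away from $1$ exactly when $\exp(G)$ is prime, i.e.\ when $G$ is elementary abelian; this gives (1).

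Assume next that $G$ is a $p$-group. Then every nonprincipal codegree is a power of $p$, so the hypothesis says precisely that $\cod(G)=\{1,p\}$, and I claim this forces $G$ abelian. If not, choose a nonabelian quotient $\overline G$ of $G$ of least order; every proper quotient of $\overline G$ is then abelian, so $\overline G'$ is the unique minimal normal subgroup of $\overline G$, whence $|\overline G'|=p$ and $\overline G'\le Z(\overline G)$, and since each subgroup of order $p$ of $Z(\overline G)$ is normal in $\overline G$ it equals $\overline G'$; thus $Z(\overline G)$ is cyclic. A $p$-group with cyclic centre has a faithful irreducible character $\chi$, which is nonlinear because $\overline G$ is nonabelian; writing $|\overline G|=p^{m}$ (so $m\ge 3$) and $\chi(1)=p^{a}$, the bound $p^{2a}\le|\overline G|$ gives $\cod_{\overline G}(\chi)=p^{m-a}$ with $m-a\ge m/2\ge 2$, so $p^{2}$ divides a codegree of $\overline G$; together with $p\in\cod(\overline G)$ this contradicts the hypothesis for $\overline G$, hence for $G$. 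From now on $G$ is nonabelian and not a $p$-group, so $G/G'$, being abelian with the inherited property, is a nontrivial elementary abelian $p$-group; hence the integer $p$ is a codegree and is the only codegree divisible by $p$, and consequently $p\nmid\cod(\chi)$ for every nonlinear $\chi\in\Irr(G)$ (a nonlinear $\chi$ with $\cod(\chi)=p$ would force a group of order $p^{2}$ with a faithful irreducible character of degree $p$).

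Now suppose $G$ is solvable. Pick a minimal normal subgroup $N$; by induction $G/N$ is trivial, elementary abelian, or Frobenius of the type in (2). A case analysis --- using Schur--Zassenhaus, coprime action, and the Clifford correspondence, and repeatedly exploiting that $p$ divides a single codegree, which is moreover the prime $p$, to rule out any character nontrivial on $N$ producing a second codegree divisible by $p$ or by a prime of $\pi(N)$ --- then shows that $\oh p G=1$, that $F(G)$ is a $q$-group for a single prime $q\ne p$, that a complement of order $p$ to $F(G)$ acts fixed-point-freely, and hence that $G$ is Frobenius as in (2); finally coprimality forces all $q$-power codegrees to agree, giving $\cod(G)=\{1,p,q^{s}\}$. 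I expect this part to be tedious but wholly elementary.

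Finally suppose $G$ is nonsolvable, and take $G$ of least order with the property. Every proper quotient of $G$ satisfies the hypothesis, hence (by the cases already treated) is solvable, so $G$ has a unique minimal normal subgroup $N\cong S^{k}$ with $S$ nonabelian simple, $C_G(N)=1$, and $G/N$ solvable. Choose nonprincipal $\chi_0,\chi_0'\in\Irr(S)$, of distinct degrees, each extending to $\Aut(S)$; then $\chi_0^{\times k}$ and $(\chi_0')^{\times k}$ extend from $N$ to $\Aut(N)=\Aut(S)\wr\Sym_k$, and restricting such extensions to $G$ yields $\psi,\psi'\in\Irr(G)$ with $\psi|_N=\chi_0^{\times k}$ and $\psi'|_N=(\chi_0')^{\times k}$. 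Since $N$ is the unique minimal normal subgroup and $\Ker\psi\cap N=1$, both $\psi,\psi'$ are faithful, so $\cod(\psi)=\cod_S(\chi_0)^{k}\,|G/N|$ and $\cod(\psi')=\cod_S(\chi_0')^{k}\,|G/N|$, which are distinct because $\chi_0(1)\ne\chi_0'(1)$. If $\chi_0,\chi_0'$ can be chosen with $\gcd\bigl(\cod_S(\chi_0),\cod_S(\chi_0')\bigr)>1$, then $\cod(\psi)$ and $\cod(\psi')$ are two distinct codegrees of $G$ sharing a prime, contradicting the hypothesis; so the proof reduces to producing such a pair for every nonabelian simple group $S$. \textbf{This last reduction is the main obstacle.} It is a statement about the automorphism-extendible irreducible characters of the finite simple groups --- the Steinberg character being a natural candidate in the groups of Lie type --- and I expect that establishing it requires the classification together with a genuine, though bounded, amount of case-by-case verification.
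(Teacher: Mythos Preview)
Your approach differs substantially from the paper's, and both of your declared gaps are exactly the places where the paper takes a shorter route by quoting existing literature rather than arguing from scratch.

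For the solvable structure the paper does not induct on $|G|$. Instead it observes that under the coprimeness hypothesis the codegree graph $\Gamma(G)$ (vertices $\pi(G)$, edges joining primes that co-divide a codegree) has exactly $|\cod(G)|-1$ connected components; a result of Qian--Wang--Wei then forces the prime graph on element orders to have at least that many components, and the Gruenberg--Kegel theorem bounds this number by $2$ for solvable groups. Hence $|\cod(G)|\le 3$, and the paper finishes by invoking the existing classifications of groups with at most three codegrees (Du--Lewis; Alizadeh et al.). This replaces your ``tedious but wholly elementary'' case analysis by three citations; your direct induction is plausible but, as written, is not a proof.

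For nonsolvability the paper's mechanism is also different. Rather than seeking \emph{two} $\Aut(S)$-extendible characters with non-coprime codegrees, it uses a \emph{single} $\theta\in\Irr(S)$ (already produced in \cite{mor21}) extending to $\Aut(S)$ whose codegree is divisible by all primes of $|S|$ but at most one; lifting gives a faithful $\tilde\psi\in\Irr(G/C)$ whose codegree misses at most one prime of $|G/C|$, so the coprimeness hypothesis forces $|\cod(G/C)|\le 3$, and the already-cited ``$|\cod|\le 3\Rightarrow$ solvable'' theorem yields the contradiction. Only the three sporadic groups $J_4$, $Fi'_{24}$, $M$ fail to supply such a $\theta$, and for precisely those three the paper falls back on your two-character argument. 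So your ``main obstacle'' is a valid reduction, but it redoes across all simple groups what the paper gets almost for free from a lemma already in print. Incidentally, you are overstating its difficulty: if $G\neq N$ then $|G/N|>1$ already supplies a common prime factor of $\cod(\psi)$ and $\cod(\psi')$, so there you only need two $\Aut(S)$-extendible nonprincipal characters of \emph{distinct degrees}; and if $G=N$ then necessarily $k=1$ (else $G$ has a nonsolvable proper quotient), so $G=S$ is simple, no extendibility is needed, and a counting argument suffices (pairwise coprime nonprincipal codegrees are divisors of $|S|$ each of size at least $|S|^{1/2}$, so there can be at most two, forcing $|\cod(S)|\le 3$ and hence $S$ solvable).
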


 \section{Proof of Theorems A, B and C}
 
 We will use without further explicit mention the following properties of character codegrees. This is \cite[Lemma 2.1]{qww}.
 
 \begin{lem}
 Let $G$ be a finite group and let $N\trianglelefteq G$. Then
 \begin{enumerate}
 \item
 Let $\chi\in\Irr(G)$ such that $N\leq\Ker\chi$. The codegree of $\chi$ viewed as a character of $G/N$ and the codegree of $\chi$ viewed as a character of  $G$ coincide.
  \item
 If $\theta\in\Irr(N)$ and $\chi\in\Irr(G)$ lies over $\theta$, then $\cod(\theta)$ divides $\cod(\chi)$.
 \end{enumerate}
 \end{lem}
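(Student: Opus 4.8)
The plan is to prove the two parts in turn, with part (i) doing double duty: it is an easy kernel-and-degree computation on its own, and it is also the tool that powers the reduction in part (ii).

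For (i), I would just track how degree and kernel transform under the quotient map $G\to G/N$. If $\bar\chi$ denotes $\chi$ viewed as a character of $G/N$, then $\bar\chi(1)=\chi(1)$ and, by the correspondence theorem, $\Ker\bar\chi=\Ker\chi/N$, so that $|(G/N):\Ker\bar\chi|=|G:\Ker\chi|$. Dividing gives $\cod(\bar\chi)=|G:\Ker\chi|/\chi(1)=\cod(\chi)$, which is exactly the claim.

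For (ii), the first step is to record the Clifford-theoretic relation between the kernels. Writing $\chi_N=e\sum_{i=1}^t\theta_i$ with the $\theta_i$ the distinct $G$-conjugates of $\theta=\theta_1$, one has $\Ker\chi\cap N=\Ker(\chi_N)=\bigcap_i\Ker\theta_i\le\Ker\theta$; in particular $\theta$ is inflated from $N/(N\cap\Ker\chi)$. This is precisely what lets me reduce to the faithful case: setting $K=\Ker\chi$ and passing to $G/K$, to $NK/K\cong N/(N\cap K)$, and to the inflations $\bar\chi,\bar\theta$, part (i) shows both codegrees are unchanged, $\bar\chi$ is faithful and still lies over $\bar\theta$, and the index $|N:\Ker\theta|$ is preserved. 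So I may assume $\Ker\chi=1$, i.e. $\cod(\chi)=|G|/\chi(1)$.

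In the faithful case the assertion $\cod(\theta)\mid\cod(\chi)$ unwinds to the single divisibility $|N:\Ker\theta|\cdot\tfrac{\chi(1)}{\theta(1)}\mid|G|$ (clear $\theta(1)$ from $\cod(\theta)$ and $\chi(1)$ from $\cod(\chi)$; here $\theta(1)\mid\chi(1)$ since $\chi(1)=et\theta(1)$). I would get this from two elementary divisibilities whose product is controlled: trivially $|N:\Ker\theta|\mid|N|$, and $\chi(1)/\theta(1)\mid|G:N|$. Using that $a\mid A$ and $b\mid B$ force $ab\mid AB$, the product divides $|N|\cdot|G:N|=|G|$, as required. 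The main obstacle is the only nonformal input, $\chi(1)/\theta(1)\mid|G:N|$; everything else is bookkeeping about kernels and degrees. I expect to quote this from Isaacs' book rather than reprove it, since its proof goes through projective representations: the ramification $e$ is the degree of an irreducible projective representation of the inertia quotient $I_G(\theta)/N$, hence divides $|I_G(\theta):N|$ (a character degree divides the index of a central subgroup), and $t=|G:I_G(\theta)|$, so $\chi(1)/\theta(1)=et$ divides $|G:N|$.
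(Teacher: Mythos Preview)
Your proof is correct. Part (i) is a direct computation, and your reduction in part (ii) to the faithful case via part (i) is clean; the key nontrivial input $\chi(1)/\theta(1)\mid|G:N|$ is indeed \cite[Corollary 11.29]{isa}, and the rest follows as you say.

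Note, however, that the paper does not actually prove this lemma: it simply cites it as \cite[Lemma 2.1]{qww}. So there is no ``paper's own proof'' to compare against here---you have supplied a self-contained argument where the paper chose to quote the literature.
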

 
 The following deep theorem \cite[Theorem 2.1]{kel} is the key to our proofs.
 
 \begin{thm}
 \label{orb}
 Let $G$ be a solvable group acting faithfully and irreducibly on a vector space $V$ over a finite field. Then
 $$
 \dl(G)\leq24\log_2 m^*(G,V)+389,
 $$
 where $m^*(G,V)$ is the number of orbit sizes of $G$ on $V$ that exceed $1$.
 \end{thm}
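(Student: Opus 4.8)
The plan is to deduce the bound from two ingredients: a quantitative bound on the derived length of a completely reducible solvable linear group in terms of the dimension of its module, and a reduction that replaces the dimension by the number of nontrivial orbit sizes. First I would establish (or cite) the structural engine that a solvable completely reducible subgroup $H$ of $\GL(n,q)$ satisfies $\dl(H)\le 24\log_2 n+389$. This is where the constants originate. Via Clifford theory one reduces to the primitive case; the primitive solvable linear groups have a symplectic-type normal subgroup $F$ with $H/F$ embedding into a product of symplectic groups and $\dl(H)\le\dl(H/F)+O(1)$, while the imprimitive case embeds $H$ into a wreath product $K\wr P$ with $P\le S_t$ a solvable permutation group. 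The factor $24$ is exactly Dixon's bound $|P|\le 24^{(t-1)/3}$ for solvable permutation groups, which controls how much derived length each wreathing layer can contribute relative to the growth of $t$ (and hence of $n\ge t$), and it is this imprimitive tower that produces the logarithmic growth of $\dl$ in $n$.

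The second and harder ingredient is to show that, for our faithfully and irreducibly acting $G$, the dimension appearing in the engine can be taken to be at most $m^*(G,V)$. The idea is that distinct nontrivial orbit sizes $s_1>\cdots>s_{m^*}$ record distinct vector-stabilizer indices $|G:G_{v_i}|$, and since the action is faithful the stabilizers of suitably chosen orbit representatives have trivial common core. From these I would build a faithful module whose number of irreducible constituents, and hence whose dimension after passing to a completely reducible section of the same derived length, is bounded by $m^*$. Concretely, one wants each successive drop in a chief series of $G$ that genuinely contributes to $\dl(G)$ to be \emph{witnessed} by a new orbit size, so that the composition length of the part of $G$ acting nontrivially---equivalently the dimension of the small faithful module---is at most $m^*(G,V)$. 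Applying the engine to this small module then yields $\dl(G)\le 24\log_2 m^*(G,V)+389$.

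The main obstacle is precisely this orbit-to-dimension reduction: orbit sizes of $G$ on $V$ are governed by vector stabilizers in an intricate way, and it is not automatic that few orbit sizes force a small faithful module. The crux is a counting argument guaranteeing that each additional structural layer contributing to the derived length also produces a genuinely new orbit size, carried out while respecting the imprimitivity decomposition (where Dixon's constant enters) and the symplectic-type normal subgroups of the primitive constituents, so that the per-layer bookkeeping lines up exactly with the factor $24\log_2$ and the additive constant $389$ of the engine. Securing this matching, and in particular ruling out degenerate configurations in which many structural layers share a single orbit size, is the heart of the proof and the step I expect to be most delicate; it will likely require the orbit-existence theorems for solvable linear groups (regular and large orbits) to guarantee that enough distinct orbit sizes actually occur.
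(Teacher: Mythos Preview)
Your proposal attempts to reprove the theorem from scratch, but in the paper this statement is not proved at all: it is simply \emph{Keller's theorem} \cite[Theorem~2.1]{kel}, quoted verbatim except for a cosmetic change. Keller's bound is stated in terms of $m(G,V)$, the total number of orbit sizes (including the trivial orbit), and the paper passes to $m^*(G,V)$ via the crude inequality $\log_2 m(G,V)\le \log_2 m^*(G,V)+1$, which absorbs an extra $24$ into the additive constant to produce $389$. That one-line adjustment is the entire argument; the deep content (orbit-size analysis for solvable linear groups) is outsourced to Keller.

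Beyond this mismatch in scope, the sketch you give for a self-contained proof has real gaps. Your ``engine'' $\dl(H)\le 24\log_2 n+389$ for completely reducible solvable $H\le\GL(n,q)$ is not the right intermediate statement; the known bounds on derived length in terms of \emph{dimension} are much sharper (of the order of a few times $\log_2 n$), and the constant $24$ does not arise from Dixon's order bound $|P|\le 24^{(t-1)/3}$ in the way you suggest---that is a bound on $|P|$, not on $\dl(P)$. More seriously, your second ingredient, producing a faithful module of dimension at most $m^*(G,V)$, is not established and is essentially the content of Keller's paper, which proceeds by a delicate induction through the imprimitivity tower together with orbit-counting on the various blocks; it does not follow from a generic ``each layer contributes a new orbit size'' argument. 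So your plan identifies the right themes (imprimitivity, symplectic-type normal subgroups, orbit bookkeeping) but does not supply the mechanism, which is precisely why the paper cites Keller rather than reproving the result.
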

 
 \begin{proof}
 This is essentially \cite[Theorem 2.1]{kel}. The only difference is that we are considering just non-trivial orbits and we have used the crude bound $\log_2m(G,V)\leq\log_2m^*(G,V)+1$.
 \end{proof}

  Now, we are ready to prove Theorem C. If $N\trianglelefteq G$ we write $$\Irr(G|N)=\{\chi\in\Irr(G)\mid\chi\not\leq\Ker\chi\}.$$ We also write $\cd(G|N)$ and $\cod(G|N)$ to denote the sets of degrees and codegrees, respectively, of the characters in $\Irr(G|N)$.
   
  \begin{proof}[Proof of Theorem C]
  We may assume that $\Phi(G)=1$.  By G\"aschutz's theorem \cite[Theorem 1.12]{mw}, $G$ splits over $\bF(G)=\bO_p(G)$.  Hence, we can decompose $G$ as the semidirect product $G=HV$,
  where  $V=\bO_p(G)$ is a completely reducible faithful $H$-module over the field with $p$ elements. Write $V=V_1\oplus\cdots\oplus V_t$ as a direct sum of irreducible $H$-modules $V_i$. 
  Note that $H\cong G/\bO_p(G)$ and $H/\bC_H(V_i)$ acts faithfully and irreducibly on $V_i$ for every $i=1,\dots,n$. Note that $H$ embeds into $H/\bC_H(V_i)\times\cdots\times H/\bC_H(V_t)$, so 
  $$
  \dl( G/\bO_p(G))=\dl(H)\leq\max\{\dl(H/\bC_H(V_1)),\dots,\dl(H/\bC_H(V_t))\}.
  $$
  
  For every $i$, write $V=V_i\oplus W_i$, where $W_i$ is the direct sum of the remaining $V_i$'s. Note that  $G/W_i\cong HV_i$. Hence, the semidirect product $\Gamma$ of $ H/\bC_H(V_i)$ acting on $V_i$ is isomorphic to a quotient of $G$. In particular, $\cod(\Gamma)\subseteq\cod(G)$. 
  
  Note that for every $1_{V_i}\neq\lambda\in\Irr(V_i)$, $p$ divides $\cod(\lambda)$. Hence, $p$ divides $\cod(\chi)$ for every $\chi\in\Irr(\Gamma|V_i)$. On the other hand, since $V_i$ is the unique minimal normal subgroup of $\Gamma$, all the characters in $\Irr(G|V_i)$ are faithful. This implies that $$|\cd(\Gamma|V_i)|=|\cod(\Gamma|V_i)|.$$ Furthermore, by Problem 6.18 of \cite{isa}, for every  $1_{V_i}\neq\lambda\in\Irr(V_i)$, $\lambda$ extends to $I_{\Gamma}(\lambda)$. It follows from Clifford's correspondence \cite[Theorem 6.16]{isa} that $$m^*(H/\bC_H(V_i),\Irr(V_i))\subseteq\cd(\Gamma|V_i).$$ We conclude that 
  $$
  m^*(H/\bC_H(V_i),\Irr(V_i))\leq|\cod(\Gamma|V_i)|\leq|\cod_p(\Gamma)|\leq|\cod_p(G)|.
  $$
  Notice that since $H/\bC_H(V_i)$ acts faithfully and irreducibly on $V_i$, it also acts faithfully and irreducibly on the dual group $\Irr(V_i)$ by \cite[Proposition 12.1]{mw}.
   Hence, by Theorem \ref{orb},
  $$
  \dl(H/\bC_H(V_i))\leq24\log_2 m^*(H/\bC_H(V_i),\Irr(V_i))+389\leq24\log_2|\cod_p(G)|+389.
$$
  Therefore,
  $$
   \dl(G/\bO_p(G))\leq\max\{\dl(H/\bC_H(V_1)),\dots,\dl(H/\bC_H(V_t))\}\leq24\log_2|\cod_p(G)|+389,
   $$
   as wanted.
   \end{proof}

As discussed in \cite{mor1}, it is not clear whether the derived length of a $p$-group is bounded in terms of the number of character codegrees,  so it does not seem easy to remove $\bO_p(G)$ in  the statement of Theorem C.
  
  Next, we work toward a proof of Theorem B. We need the following lemma.
  
  \begin{lem}
  \label{new}
  Let $G$ be a group. Assume that for every prime $p$, $G$ has at most $k$ character codegrees that are divisible by $p$. Let $L\leq K$ be normal subgroups of $G$ such that $K/L$ is nilpotent. Then $|\pi(K/L)-\pi(G/K)|\leq k$.
  \end{lem}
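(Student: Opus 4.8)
The plan is to exploit the two properties of codegrees from Lemma 2.2 together with the fact that a nilpotent section has a very transparent character theory. Let $\pi = \pi(K/L) \setminus \pi(G/K)$; I want to show $|\pi| \le k$. The key observation is that for each prime $p \in \pi$, I can produce a character codegree of $G$ divisible by $p$, and moreover these codegrees will be distinct for distinct primes, so their number — at most $k$ by hypothesis — bounds $|\pi|$.

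First I would reduce to understanding $K/L$ itself. Since $K/L$ is nilpotent, write $K/L = P_1/L \times \cdots \times P_r/L$ for its Sylow subgroups (indexed by the primes in $\pi(K/L)$). For each prime $p \in \pi(K/L)$ there is a nontrivial linear character $\mu_p$ of the Sylow $p$-subgroup $P_p/L$ of $K/L$, hence a linear character $\theta_p \in \Irr(K/L)$ whose order is a power of $p$ and whose codegree is a positive power of $p$ (since for a linear character $\lambda$, $\cod(\lambda) = o(\lambda)$, the order of $\lambda$). Inflating, we may regard $\theta_p$ as a character of $K$ with $L \le \Ker\theta_p$ and $\cod(\theta_p)$ a nontrivial power of $p$. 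Now pick $\chi_p \in \Irr(G)$ lying over $\theta_p$; by Lemma 2.2(ii), $\cod(\theta_p) \mid \cod(\chi_p)$, so $p \mid \cod(\chi_p)$.

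The second step is to arrange that the primes $p \in \pi$ give rise to \emph{different} codegrees of $G$ (otherwise a single codegree could absorb several primes of $\pi$ and the counting would fail). Here is where I would use that $p \notin \pi(G/K)$: I claim $\chi_p$ can be chosen with $\cod(\chi_p)$ a power of $p$ times a number coprime to $p$... more precisely, I would instead argue on the \emph{$p$-part}. Since $p \nmid |G/K|$, the Sylow $p$-subgroup of $G$ is contained in $K$, and in fact, using that $K/L$ is nilpotent so its Sylow $p$-subgroup $P_p/L$ is normal in $G/L$, one sees $P_p \trianglelefteq G$. Choosing $\theta_p$ to be a $G$-invariant (or at least controlling the relevant part) linear character of $P_p/L$ of $p$-power order, one gets control of the exact power of $p$ dividing $\cod(\chi_p)$: it equals the $p$-part of $|G:\Ker\chi_p|/\chi_p(1)$, and since $\chi_p(1)$ is coprime to $p$ for $\chi_p$ over a suitable $\theta_p$ (by Clifford theory over the normal $p$-subgroup $P_p/L$, using that $p \nmid |G/P_p|$), the $p$-part of $\cod(\chi_p)$ is at least $p$ while the $p$-part of the other $\cod(\chi_q)$ for $q \in \pi$, $q \ne p$, contributes in the $q$-part. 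The cleanest way to separate them: among the $\cod(\chi_p)$, $p \in \pi$, the number of distinct values is at least the number of distinct primes dividing them counted with the constraint that $p \mid \cod(\chi_p)$; if $\cod(\chi_p) = \cod(\chi_q)$ for $p \ne q$ then this common value is divisible by both $p$ and $q$, which is harmless — what matters is that \emph{each prime $p \in \pi$ divides at least one} of these codegrees, and the hypothesis bounds by $k$ the total number of codegrees of $G$ divisible by any single fixed prime; so I should instead count: the set $\{\cod(\chi_p) : p \in \pi\}$ has the property that $p \mid \cod(\chi_p)$, and I need these codegrees to be pairwise distinct, which follows because $\cod(\chi_p)$, localized, has its ``$\pi$-part'' equal to exactly the contribution from $\theta_p$ (a power of $p$) — here the hypothesis $\pi \cap \pi(G/K) = \emptyset$ forces $\chi_p$ to restrict to $K$ as a character whose $\pi$-structure is dictated entirely by $\theta_p$, since the $\pi$-part of $|G:K|$ is trivial. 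Hence $\cod(\chi_p)_\pi = p^{a_p}$ for some $a_p \ge 1$, and these are distinct for distinct $p$, giving $|\pi| \le |\cod_p(G)| \le k$ for the relevant $p$...

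I expect the main obstacle to be precisely this last separation argument: ensuring that the codegrees $\cod(\chi_p)$ one extracts are genuinely distinct as $p$ ranges over $\pi$, which is exactly where the hypothesis $p \notin \pi(G/K)$ must be used in an essential way (a prime $q$ dividing $|G/K|$ could appear in $\cod(\chi_p)$ and spoil the bookkeeping). The resolution should be to take $\theta_p$ of $p$-power order and note that then $\cod(\chi_p)$ is divisible by a specific power of $p$ while its ``$\pi \setminus \{p\}$''-part can be shown trivial because neither $\theta_p$ (which is a $p$-element) nor the index $|G:K|$ (coprime to $\pi$) nor the stabilizer quotient contributes any prime of $\pi$ other than $p$ — so the map $p \mapsto (\text{the } \pi\text{-part of }\cod(\chi_p))$ is injective on $\pi$, and all these codegrees are divisible by their associated prime, so $|\pi| \le k$.
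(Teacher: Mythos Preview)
Your argument has a genuine gap at the very last step, and it is not a technicality but the heart of the matter. The hypothesis says that \emph{for each fixed prime $p$} there are at most $k$ codegrees of $G$ divisible by $p$. Your construction produces, for each $p\in\pi$, a codegree $\cod(\chi_p)$ whose $\pi$-part is a pure power of $p$; so these codegrees are indeed pairwise distinct, but no single prime divides more than one of them. Having $|\pi|$ distinct codegrees, each divisible by a \emph{different} prime of $\pi$, does not contradict the hypothesis at all (take $k=1$: nothing prevents $|\pi|$ from being arbitrarily large in your setup). So the inference ``all these codegrees are divisible by their associated prime, so $|\pi|\le k$'' is simply invalid.

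What is missing is a mechanism that forces all the codegrees you produce to share a common prime divisor. The paper achieves this with a nested construction: enumerate $\pi=\{p_1,\dots,p_t\}$ and take $\lambda_i\in\Irr(K/L)$ linear of order $p_1p_2\cdots p_i$ (rather than of prime-power order). Then every $\cod(\chi_i)$ is divisible by $p_1$, while for $j<i$ the prime $p_i$ cannot divide $\cod(\chi_j)$ because $\chi_j$ factors through $G/P_{j+1}\cdots P_tH$, a group of order prime to $p_i$ (here one uses $p_i\notin\pi(G/K)$ exactly as you anticipated). This gives $t$ distinct codegrees all divisible by $p_1$, whence $t\le k$. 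Your per-prime characters $\theta_p$ of $p$-power order are the wrong building blocks; the staircase $\lambda_i$ of order $p_1\cdots p_i$ is what makes the counting work.
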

  
  \begin{proof}
  We have to show that the number of ``new" prime divisors that appear in $K/L$ is at most $k$. Let $$\pi(K/L)-\pi(G/K)=\{p_1,\dots,p_t\}=\pi.$$ Write $$K/L=P_1/L\times\cdots\times P_t/L\times H/L,$$ where $P_i/L$ is a Sylow $p_i$-subgroup of $K/L$ for every $i$ and $H/L$ is a Hall $\pi$-complement. Notice that the codegree of a linear character of order $p_1\cdots p_i$ is $p_1\cdots p_i$ for every $i$. Take  
  $\lambda_i\in\Irr(K/L)$ of order  $p_1\cdots p_i$ for every $i$. Notice that $P_{i+1}\cdots P_tH\leq\Ker\lambda_i$ for every $i<t$, whence $\lambda_i\in\Irr(K/P_{i+1}\cdots P_tH)$. We adopt the convention that if $i=t$ then $P_{i+1}\cdots P_tH=H$, so that the previous sentence also holds for $i=t$. 
  
  Let $\chi_i\in\Irr(G)$ lying over $\lambda_i$ for every $i$. Note that $\chi_i\in\Irr(G/P_{i+1}\cdots P_tH)$ has codegree a multiple of $p_1\cdots p_i$ by Lemma 2.1.  Notice that if $1\leq j<k\leq t$ then $p_k$ does not divide $|G/P_{j+1}\cdots P_tH|$ so the codegrees of $\chi_1,\dots,\chi_t$ are pairwise different. Thus, we have found $t$ different character codegrees that are multiples of $p_1$. Hence $t\leq k$, as wanted.
  \end{proof}

 \begin{proof}[Proof of Theorem B]
 Let $p$ be a prime divisor of $|G|$. Recall that $\bO_{p',p}(G)$ is the inverse image in $G$ of $\bO_p(G/\bO_{p'}(G))$. 
 Applying Theorem C to $G/\bO_{p'}(G)$ we deduce that 
 $$
 \dl(G/\bO_{p',p}(G))\leq 24\log_2|\cod_p(G)|+389\leq 24\log_2k+389.
 $$

Note that by elementary group theory the intersection of $\bO_{p',p}(G)$ when $p$ runs over the prime divisors of $|G|$ is contained in the Fitting subgroup  $\bF(G)$. Hence
 $$
 \dl(G/\bF(G))\leq\dl(G/\bigcap_{p\mid|G|}\bO_{p',p}(G))\leq\max_{p\mid|G|}\dl(G/\bO_{p',p}(G))\leq24\log_2k+389.
 $$

 This implies that there exist $1=N_0\leq N_1\leq\cdots\leq N_{l-1}\leq N_l=G$ normal subgroups of $G$ such that $l\leq24\log_2k+390$ and $N_{i+1}/N_i$ is nilpotent for every $i$ (in fact, we may assume that it is abelian for $i>0$). By the previous lemma, the number of ``new" prime divisors in every factor $N_{i+1}/N_i$ is at most $k$. The result follows.
 \end{proof}
 
 \begin{proof}[Proof of Theorem A]
 It suffices to apply Theorem B and the pigeonhole principle.
 \end{proof}
 
 \section{Proof of Theorem D}
 
 While Benjamin's theorem \cite{ben} only uses standard character theory, here we have needed to use  Keller's large orbit theorem. In Benjamin's paper, it was suggested that if $G$ is solvable and for every prime $p$ there exist at most $k$ character degrees that are divisible by $p$ then, perhaps, $|\cd(G)|\leq3k$.  We think that the right bound in Theorem A is linear, but it is not clear even  what this linear bound should be. For instance, it is not true that if for every prime $p$, $p$ divides at most $k$ character codegrees, then  $|\cod(G)|\leq3k$. Consider for instance the Frobenius group $G=\CCC_6\ltimes\CCC_{7\cdot13}$. It is easy to check that $\cod(G)=\{1,2,3,6,7,13,91\}$, so for this group every prime divides at most $2$ character codegrees while $|\cod(G)|=7$.
 
 We conjecture that Theorems A and B should hold for arbitrary finite groups (and with linear bounds). Using the classification of finite simple groups, we can obtain sharp bounds when $k=1$.  
 We need the following lemma.
 
  \begin{lem}
 \label{sim}
 Let $S$ be a nonabelian simple group.  Suppose that $S\not\in\{J_4, Fi'_{24}, M\}$. Then there exists $\theta\in\Irr(S)$ that extends to $\Aut(S)$ such that $\cod(\theta)$ is divisible by all the prime divisors of $|G|$ except for at most one. If $S\in\{J_4, Fi'_{24}, M\}$ then there exist $\theta_1,\theta_2\in\Irr(S)$ that extend to $\Aut(S)$ such that $\cod(\theta_1)\neq\cod(\theta_2)$ and $(\cod(\theta_1),\cod(\theta_2))>1$.
 \end{lem}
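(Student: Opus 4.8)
The plan is to invoke the classification of finite simple groups and argue family by family. Since $S$ is simple, $\Ker\theta=1$ for every nontrivial $\theta\in\Irr(S)$, so $\cod(\theta)=|S|/\theta(1)$; hence $\cod(\theta)$ fails to be divisible by a prime $p$ exactly when $\theta(1)$ contains the full $p$-part of $|S|$, in which case I will say that $p$ is \emph{saturated} for $\theta$. The first assertion thus amounts to exhibiting, for each nonabelian simple $S\notin\{J_4,Fi'_{24},M\}$, an irreducible character that extends to $\Aut(S)$ and has at most one saturated prime.

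For $S$ of Lie type in defining characteristic $p$ this is immediate: take $\theta=\mathrm{St}$, the Steinberg character, which has degree $\mathrm{St}(1)=|S|_p$. It is well known that $\mathrm{St}$ is fixed by every automorphism of $S$ and extends to $\Aut(S)$; its only saturated prime is $p$, and $\cod(\mathrm{St})=|S|_{p'}$ is divisible by every other prime dividing $|S|$. Applying this to $A_6\cong\PSL_2(9)$ (where $\mathrm{St}(1)=9$ and $\cod(\mathrm{St})=40$) takes care of the exceptional alternating group. For $A_n$ with $n\ge5$ and $n\ne6$ I instead use the standard character $\chi$ of degree $n-1$, which extends to $S_n=\Aut(A_n)$. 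A prime $p$ saturated for $\chi$ must divide $n-1$; comparing $v_p(n-1)$ with $v_p(n!)=\sum_{i\ge1}\lfloor n/p^i\rfloor$ (and using $v_p(n!)\ge 1+p+\cdots+p^{\,v_p(n-1)-1}$) shows that for $n\ge7$ the only possibility is $n-1=p$ with $n-1$ prime, so at most one prime is saturated; the case $n=5$ ($\chi(1)=4$, $\cod(\chi)=15$) is checked directly.

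It remains to treat the sporadic groups by inspection of the ATLAS. For each of the $23$ sporadic simple groups $S\notin\{J_4,Fi'_{24},M\}$ one locates an irreducible character $\theta$ of small degree that extends to $\Aut(S)$ and has at most one saturated prime — often one of prime-power degree, such as the degree-$16$ character of $M_{11}$ — and records a suitable choice in each case in a short table. For $S\in\{J_4,Fi'_{24},M\}$ the assertion is easy: take $\theta_1=1_S$ and let $\theta_2$ be any nontrivial character that extends to $\Aut(S)$ (every character does when $\Out(S)=1$, i.e.\ for $J_4$ and $M$; for $Fi'_{24}$ one reads off from the ATLAS a faithful character that extends to $Fi_{24}$). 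Then $\cod(\theta_1)=|S|$ and $\cod(\theta_2)=|S|/\theta_2(1)$ are distinct, and $\cod(\theta_2)$ divides $\cod(\theta_1)$, so $(\cod(\theta_1),\cod(\theta_2))=|S|/\theta_2(1)>1$.

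The main obstacle is the sporadic case: it is a finite but lengthy verification, and the delicate point is confirming from the ATLAS that each chosen character genuinely extends to the full automorphism group, not merely that it is automorphism-invariant. Some care is also needed in the bookkeeping so that the simple groups lying in more than one of the families above — for instance $A_5\cong\PSL_2(4)$, $A_6\cong\PSL_2(9)$ and $A_8\cong\PSL_4(2)$ — are accounted for exactly once.
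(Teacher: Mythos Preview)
Your treatment of the first assertion is sound and is essentially the case-by-case classification argument that the cited reference carries out (Steinberg for Lie type, the degree-$(n-1)$ character for $A_n$, and ATLAS inspection for the sporadics); the paper itself simply quotes that reference rather than reproving it. One minor slip: for $A_n$ you compare $v_p(n-1)$ with $v_p(n!)$ rather than with $v_p(n!/2)=v_p(|A_n|)$, but this only affects $p=2$ and the conclusion survives, since a short check shows $2$ is saturated for the degree-$(n-1)$ character only when $n=5$.

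There is, however, a genuine error in your argument for $S\in\{J_4,Fi'_{24},M\}$. You take $\theta_1=1_S$ and assert $\cod(\theta_1)=|S|$, but by definition $\cod(1_S)=|S:\Ker 1_S|/1=|S:S|=1$. The identity $\cod(\theta)=|S|/\theta(1)$ holds only for \emph{non-principal} $\theta$, precisely because those are the characters with trivial kernel; the paper makes this restriction explicit. With $\cod(\theta_1)=1$ you get $(\cod(\theta_1),\cod(\theta_2))=1$, which is the opposite of what is required. The repair is immediate: choose two non-principal $\theta_1,\theta_2$ of distinct degrees from the ATLAS (for $J_4$ and $M$ we have $\Out(S)=1$ so any pair works; for $Fi'_{24}$ pick two that extend to $Fi_{24}$), and then $\cod(\theta_i)=|S|/\theta_i(1)$ are distinct and automatically share a common prime factor since $|\pi(S)|$ far exceeds the number of prime divisors of either degree.
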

 
 \begin{proof}
 The first part is \cite[Lemma 2.1(i)]{mor21}. The second part can be checked in the Atlas \cite{atl}. (Note that since $S$ is simple, $\cod(\theta)=|S|/\theta(1)$ for any non-principal $\theta\in\Irr(S)$.)
 \end{proof}
 
 Now, we complete the proof of Theorem D, which we restate for the convenience of the reader.
 
 \begin{thm}
 Let $G$ be a finite group. Then any two different character codegrees are coprime if and only if one of the following holds:
 \begin{enumerate}
 \item
 $G$ is an elementary abelian $p$-group for some prime $p$. In this case, $\cod(G)=\{1,p\}$.
 \item
 $G$ is a Frobenius group with complement of prime order $p$, kernel a $q$-group for some prime $q$ and $\cod(G)=\{1,p,q^s\}$ for some integer $s\geq 1$.
 \end{enumerate}
 \end{thm}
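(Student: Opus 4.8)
\emph{Plan.} I would prove the two implications separately; the forward (``if'') direction is a short computation, and the reverse direction carries essentially all of the work. For the ``if'' direction: if $G$ is elementary abelian of exponent $p$ then every $\chi\in\Irr(G)$ is linear and $\cod(\chi)=|G:\Ker\chi|\in\{1,p\}$, so $\cod(G)=\{1,p\}$. If $G=CK$ is a Frobenius group with $C$ of prime order $p$ and kernel a $q$-group $K$, then the characters of $G$ having $K$ in their kernel contribute the codegrees of $G/K\cong C$, namely $\{1,p\}$; and any $\chi\in\Irr(G\mid K)$ lies over a non-principal $\theta\in\Irr(K)$, which (as $C$ acts fixed-point-freely on $K$, hence on $\Irr(K)$) has inertia group $K$, so $\chi=\theta^G$, $\Ker\chi\le K$, $\chi(1)=p\,\theta(1)$ and $\cod(\chi)=|G:\Ker\chi|/\chi(1)=|K:\Ker\chi|/\theta(1)$ is a power of $q$; under the stated hypothesis $\cod(G)=\{1,p,q^s\}$, and since a Frobenius kernel and complement have coprime orders, $p\ne q$, so the three codegrees are pairwise coprime.

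For the ``only if'' direction I would record two elementary facts used throughout. First, for every prime $r\mid|G|$ some codegree is divisible by $r$: pick $R\in\syl rG$, a non-principal linear $\lambda\in\Irr(R)$, and any $\chi\in\Irr(G)$ over $\lambda$; then $\cod(\lambda)$, a nontrivial power of $r$, divides $\cod(\chi)$ by Lemma~2.1. Hence, by hypothesis, there is a \emph{unique} codegree divisible by $r$. Second, the codegrees of the linear characters of $G$ are precisely the divisors of $\exp(G/G')$; pairwise coprimality forces $\exp(G/G')\in\{1\}\cup\{\text{primes}\}$, so either $G$ is perfect or $G/G'\cong(\ZZ/p)^d$ for a prime $p$, in which case $p\in\cod(G)$ is the unique codegree divisible by $p$ (so no codegree is divisible by $p^2$ or by $pr$ for $r\ne p$). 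Next I would show $G$ is solvable: otherwise $G$ has a non-abelian chief factor $K/L\cong S^n$ with $S$ simple, $G$ permuting the $n$ factors transitively; taking $\theta$ (resp.\ $\theta_1,\theta_2$) from Lemma~\ref{sim} and using that $\theta\times\cdots\times\theta\in\Irr(S^n)$ is faithful with $\cod_{S^n}(\theta\times\cdots\times\theta)=(|S|/\theta(1))^n$, Lemma~2.1 yields an irreducible character of $G$ whose codegree is divisible by at least two primes; combined with a second character of $G$ lying over a suitable $\psi\times1\times\cdots\times1$ (here one invokes that $S$, being simple, has no normal $r$-complement, so by Thompson's theorem some non-principal $\psi\in\Irr(S)$ has degree coprime to a given prime $r\mid|S|$), one produces two \emph{distinct} codegrees of $G$ with a common prime divisor — a contradiction; the exceptional cases $J_4,Fi'_{24},M$ are handled directly from the pair in Lemma~\ref{sim}.

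So assume $G$ is solvable and argue by induction on $|G|$. If $|\pi(G)|=1$, then $\cod(G)\subseteq\{1,p\}$; a $p$-group with $\cod(G)=\{1,p\}$ has no faithful non-linear irreducible character (such a $\chi$ would force $|G|=p\chi(1)\le\chi(1)^2$, i.e.\ $|G|\le p^2$ with $\chi(1)>1$), and applying this to each $G/\Ker\chi$ (using $\cod(G/\Ker\chi)\subseteq\cod(G)$) shows $G$ is abelian, hence elementary abelian — case (i). If $|\pi(G)|\ge2$ then $G$ is non-abelian and $G/G'\cong(\ZZ/p)^d$ with $p\in\cod(G)$ the unique $p$-divisible codegree. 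Choosing non-principal linear characters on the $\oh rG$ and multiplying shows all primes dividing $|\bF(G)|$ divide one common codegree; with the previous fact this forces $\bF(G)$ to be a $p$-group or a $p'$-group. The first is impossible: passing to $G/\Phi(G)$, writing $G=HV$ with $V=\oh pG$ faithful and completely reducible, every $\chi\in\Irr(G\mid V)$ has codegree $p$, which forces $V$ to be $H$-irreducible (else some $\chi$ has codegree divisible by $p^2$) and then every such $\chi$ faithful of degree $|G|/p$, so $|G|\le p^2$, contradicting $|\pi(G)|\ge2$ together with $\exp(G/G')=p$. Hence $\bF(G)$ is a $p'$-group; applying the induction hypothesis to the quotients $G/\oh rG$ (each elementary abelian or of Frobenius type, so with at most two prime divisors and, in the Frobenius case, abelianization of prime order) forces $\bF(G)=\oh qG$ for a single prime $q\ne p$, forces $G/\bF(G)$ to be a $p$-group and $|\pi(G)|=2$, forces $G=P\ltimes Q$ with $P\cong G/\bF(G)$ elementary abelian of rank $d=1$ (if $d\ge2$ one produces a non-trivial central or direct factor $\ZZ/p$, creating the codegrees $p,p^2$), and forces the action of $P$ on $Q=\bF(G)=G'$ to be fixed-point-free. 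Thus $G$ is a Frobenius group with complement $\cong\ZZ/p$ of prime order and kernel the $q$-group $Q$; since $|\pi(G)|=2$ the unique $q$-divisible codegree is a power $q^s$, so $\cod(G)=\{1,p,q^s\}$ — case (ii).

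The main obstacle is the solvable structural analysis of the last paragraph — in particular showing $|\pi(G)|\le2$ and that the complement is $\ZZ/p$. The character-theoretic input (Lemma~2.1, the behaviour of codegrees under quotients, and the ``unique codegree per prime'' principle) is elementary, but keeping the induction and the various configurations straight — the location of $\bF(G)$, central versus non-central minimal normal subgroups, and which prime powers can appear among the codegrees — is delicate; ruling out a third prime, in particular, needs repeated passage to quotients and use of the nilpotency of Frobenius kernels. The non-solvable step is comparatively short once Lemma~\ref{sim} is granted, the only subtlety being to guarantee that the two non-coprime codegrees extracted from a non-abelian chief factor are genuinely distinct.
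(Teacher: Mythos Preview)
Your approach diverges from the paper's in both halves of the ``only if'' direction, and there is a genuine gap in your non-solvable step.

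The paper reduces to a quotient $G/C$ with a unique minimal normal subgroup $K/C\cong S^n$, embeds $G/C$ in $\Aut(S)\wr\SSS_n$, and uses a lemma of Mattarei to \emph{extend} $\psi=\theta\times\cdots\times\theta$ to $\tilde\psi\in\Irr(G/C)$. The extension is the crucial point: since $\tilde\psi$ is faithful one has $\cod(\tilde\psi)=|G{:}K|\,\cod(\psi)$ exactly, so $\cod(\tilde\psi)$ is divisible by every prime of $|G/C|$ except possibly one, whence $|\cod(G/C)|\le3$, and a cited classification of groups with at most three codegrees yields the contradiction. Your argument only invokes Lemma~2.1, so you merely know that $\cod(\theta)^n$ \emph{divides} $\cod(\chi_1)$. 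The ``subtlety'' you flag is then a real gap: nothing prevents the unique codegree of $G$ divisible by a given prime $r\mid|S|$ from being divisible by \emph{every} prime of $|S|$, in which case your second character $\chi_2$ (over $\psi\times1\times\cdots\times1$) is forced to have the \emph{same} codegree as $\chi_1$, and no contradiction arises. The exceptional sporadic cases suffer the same defect. (Also, the result you attribute to Thompson is Ito--Michler.) The paper's extension argument is precisely what resolves this distinctness issue.

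For the solvable step the two arguments are genuinely different. The paper observes that the hypothesis forces the codegree prime graph to have $|\cod(G)|-1$ connected components, links this to the element-order prime graph via a result of Qian--Wang--Wei, and applies Gruenberg--Kegel (at most two components for solvable groups) to get $|\cod(G)|\le3$; the classifications in \cite{dl} and \cite{5} then finish immediately. Your inductive structural analysis avoids all of these external references, which is a real advantage if a self-contained proof is wanted, but several of your assertions (that $\bF(G)$ is a $q$-group for a single prime $q$, that $G/\bF(G)$ is a $p$-group, that the complement has rank~$1$) are only sketched and each needs a short supplementary argument; the paper's route is considerably shorter.
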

 
 \begin{proof}
 
It is clear that if either (i) or (ii) holds, then any two different character codegrees are coprime. Assume now that any two different character codegrees are coprime. 
 First, we want to see that $G$ is solvable. Let $M$ be the solvable residual. By way of contradiction, suppose that $M>1$. Let $M/N=S\times\cdots\times S$, where $S$ is a nonabelian simple group,  be a chief factor of $G$. Let $C=\bC_{G/N}(M/N)$, so that $G/C$ has a unique minimal normal subgroup $K/C$ which is isomorphic to $M/N$. Therefore, $G$ is isomorphic to a subgroup of $\Gamma=\Aut(S)\wr\SSS_n$, where $n$ is the number of copies of $S$ that appear as direct factor of $M/N$.
 
 Suppose first that $S\not\in\{J_4, Fi'_{24}, M\}$. By Lemma \ref{sim}, there exists $\theta\in\Irr(S)$ that extends to $\Aut(S)$ such that $\cod(\theta)$ is divisible by all the prime divisors of $|S|$ except for at most one. Now, using Lemma 1.3 of \cite{mat}, we deduce that $\psi=\theta\times\cdots\times\theta\in\Irr(K/C)$ extends to $\Gamma$. 
Hence, it also extends to $G$. Let $\tilde{\psi}\in\Irr(G/C)$ be an  extension of $\psi$. Notice that $\tilde{\psi}$ is faithful (because $\psi$ is faithful and $K/C$ is the unique minimal normal subgroup of $G$. Hence, $$\cod(\tilde{\psi})=|G:K|\cod(\psi)$$ is divisible by all the prime divisors of $|G/C|$ except for at most one. Suppose that $p$ is such a prime. By hypothesis, $G/C$ has at most one character codegree that is divisible by $p$. Using our hypothesis again, we conclude that $$\cod(G/C)\subseteq\{1,p^a, \cod(\tilde{\psi})\}$$ has cardinality at most $3$.  By Theorem 3.4 of \cite{5}, this implies that $G/C$ is solvable, a contradiction.

Finally, assume that $S\in\{J_4, Fi'_{24}, M\}$.  By Lemma \ref{sim}, there exist $\theta_1,\theta_2\in\Irr(S)$ that extend to $\Aut(S)$ such that $\cod(\theta_1)\neq\cod(\theta_2)$ and $(\cod(\theta_1),\cod(\theta_2))>1$.  Using Lemma 1.3 of \cite{mat} again, we deduce that $\psi_1=\theta_1\times\cdots\times\theta_1$ and $\psi_2=\theta_2\times\cdots\times\theta_2\in\Irr(K/C)$  extend to $\Gamma$. In particular $\psi_1$ and $\psi_2$ extend to $\tilde{\psi_1},\tilde{\psi_2}\in\Irr(G/C)$, respectively. 
As before, $\cod(\tilde{\psi_1})=|G:K|\cod(\psi_1)$ and $\cod(\tilde{\psi_2})=|G:K|\cod(\psi_2)$. Therefore, $G/C$ has two different character codegrees that are not coprime. This contradiction proves that $G$ is solvable. 

Now, we want to prove that $|\cod(G)|\leq3$. By way of contradiction, suppose that $|\cod(G)|\geq 4$.  Let $\Gamma(G)$ be the graph whose vertices are the prime divisors of $|G|$ and two vertices $p$ and $q$ are joined by an edge if and only if $pq$ divides some character codegree of $G$. Note that our hypothesis implies that the number of connected components of this graph is  $|\cod(G)|-1\geq 3$. By the Corollary to Theorem E of \cite{qww}, this implies that the prime graph associated to element orders of $G$ has at least $3$ connected components. Since $G$ is solvable, this contradicts a theorem of Gruenberg and Kegel (see \cite{wil}).  Therefore, $|\cod(G)|\leq 3$.  Now, the theorem follows from Lemma 2.4 of \cite{dl} and Theorem 3.4 of \cite{5}. 
\end{proof}

\end{document}